\newtheorem{theorem}{Theorem}
\newenvironment{thmbis}[1]
  {%
   \addtocounter{theorem}{-1}%
   \begin{theorem}}
  {\end{theorem}}
\newtheorem{remark}[theorem]{Remark}
\newtheorem{lemma}[theorem]{Lemma}
\newtheorem{question}{Question}
\newtheorem{claim}{Claim}
\begin{document}

\title{\bf Saturated Subgraphs of the Hypercube}
\author{J. Robert Johnson  and Trevor Pinto\thanks{Supported by an EPSRC doctoral studentship.}\\
\small School of Mathematical Sciences,\\[-0.8ex]
\small Queen Mary University of London,\\[-0.8ex] 
\small London E1 4NS, UK.\\
}
\maketitle

\begin{abstract}
We say a graph is \emph{$(Q_n,Q_m)$-saturated} if it is a maximal $Q_m$-free subgraph of the $n$-dimensional hypercube $Q_n$. A graph is said to be \emph{$(Q_n,Q_m)$-semi-saturated} if it is a subgraph of $Q_n$ and adding any edge forms a new copy of $Q_m$.   The minimum number of edges a $(Q_n,Q_m)$-saturated graph (resp. $(Q_n,Q_m)$-semi-saturated graph) can have is denoted by $sat(Q_n,Q_m)$ (resp. $s\text{-}sat(Q_n,Q_m)$). We prove that $ \lim_{n\to\infty}\frac{sat(Q_n,Q_m)}{e(Q_n)}=0$, for fixed $m$, disproving a conjecture of Santolupo that, when $m=2$, this limit is $\frac{1}{4}$. Further, we show by a different method that $sat(Q_n, Q_2)=O(2^n)$, and  that  $s\text{-}sat(Q_n, Q_m)=O(2^n)$, for fixed $m$. We also prove the lower bound $s\text{-}sat(Q_n,Q_m)\geq \frac{m+1}{2}\cdot 2^n$, thus determining $sat(Q_n,Q_2)$ to within a constant factor, and discuss some further questions.
\end{abstract}

2010 Mathematics Subject Classification: Primary 05C35, Secondary 05D05.

\section{Introduction}

Let $F$ be a (simple) graph. We say that a (simple) graph $G$ is \emph{$F$-free} if it contains no subgraphs isomorphic to $F$. If $G$ is a maximal $F$-free subgraph of $H$, we say that $G$ is \emph{$(H, F)$-saturated}. In other words, $G$ is $F$-saturated if it is a subgraph of $H$ and the addition of any edge from $E(H)\setminus E(G)$ forms a copy of $F$. In this context, $H$ is referred to as the \emph{host graph}, $F$ as the \emph{forbidden graph} and $G$ as a \emph{saturated graph}.

The famous Tur\'an problem in extremal combinatorics can be expressed naturally in the language of saturated graphs. The extremal number of $F$, $ex(K_n,F)$, (often written as $ex(n,F)$) is usually defined as the maximum number of edges in an $F$-free subgraph of $K_n$. However, it can equivalently be written as:

\[ex(K_n, F)=\max\{e(G): G\text{ is $(K_n,F)$-saturated}\}.\]

This formulation yields a natural `opposite' of the Tur\'an problem. We define the \emph{saturation number of $F$}, $sat(H, F)$ as: 

\[sat(H,F)= \min\{e(G): G \text{ is $(H, F)$-saturated}\}.\]

A variant of this is the \emph{semi-saturation number}, $s\text{-}sat(H,F)$. We say that a graph is \emph{$(H, F)$-semi-saturated} if $G$ is a subgraph of $H$ and adding any edge from $E(H)\setminus E(G)$ increases the number of copies of $F$. A graph is $(H, F)$-saturated  if and only if it is $(H, F)$-semi-saturated and $F$-free. We define:

\[s\text{-}sat(H,F)=\min\{e(G): G \text{ is $(H, F)$-semi-saturated}\}.\]

The most frequently studied host graph is the complete graph, $K_n$. Since work in the area began with Erd\H{o}s, Hajnal and Moon \cite{erdoshajnalmoon}, many others have studied $s\text{-}sat(K_n,F)$ and $sat(K_n,F)$: see for instance the survey articles of Pikhurko \cite{pikhurko} and of J. Faudree, R. Faudree and Schmitt \cite{faudrees} and the references contained therein. 

In the literature, $sat(K_n,F)$ is often written as $sat(n,F)$  and $(K_n,F)$-saturated is usually written as $F$-saturated. Since the results in this paper concern a different host graph, we will reserve this latter abbreviation for a different meaning.

A much studied variant of the Tur\'an problem was initiated by Erdős in \cite{erdos} and expanded upon by Alon, Krech and Szab\`o \cite{alonkrechszabo}. For a fixed graph $F$, they ask for $ex(Q_n,F)$, the maximum number of edges in an $F$-free subgraph of the $n$-dimensional hypercube, $Q_n$. The most natural case is $F=Q_m$, a fixed cube. This is wide open, even for the case $m=2$. The asymptotic edge density of a maximum $Q_2$-free graph, i.e. $\lim_{n\to \infty} \frac{ex(Q_n,Q_2)}{e(Q_n)}$ was conjectured by Erd\H{o}s \cite{erdos} to be $\frac{1}{2}$. It is still unknown, despite the attention of many authors---see for instance the work of Balogh, Hu, Lidick\'y and Liu \cite{baloghhulidickyliu} and of Brass, Harborth and Nienborg \cite{brassharborthnienborg}.


In this paper, we focus on the saturation and semi-saturation problems, where the host graph is the hypercube and the forbidden graph is a subcube. That is, we study $sat(Q_n,F)$ and $s\text{-}sat(Q_n,F)$. For brevity, we shall often write $F$-saturated (resp. $F$-semi-saturated) rather than $(Q_n,F)$-saturated (resp. $(Q_n,F)$-semi-saturated) in the remainder of this paper, when the value of $n$ is clear or irrelevant.

The best result along these lines is that of Choi and Guan \cite{choiguan}:

\[\limsup_{n\to \infty} \frac{sat(Q_n,Q_2)}{e(Q_n)}\leq\frac{1}{4}.\]

A conjecture that this is best possible, due to Santolupo, was reported in \cite{faudrees}. The same survey article posed the more general question of determining $sat(Q_n,Q_m)$.

The main result of this paper, in Section \ref{zero density}, is the  construction, for all fixed $m$, of $(Q_n,Q_m)$-saturated graphs of arbitrarily low edge density, thus both generalizing and improving the bound of Choi and Guan.

\begin{theorem}\label{generalupper}
For fixed $m$,

\[\lim_{n\to \infty} \frac{sat(Q_n,Q_m)}{e(Q_n)}=0.\]

\end{theorem}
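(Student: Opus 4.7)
To prove Theorem~\ref{generalupper}, the natural approach is constructive: for each $\varepsilon>0$ and fixed $m$, I would exhibit a $(Q_n,Q_m)$-saturated graph $G\subseteq Q_n$ with $e(G)\le\varepsilon\cdot e(Q_n)$ whenever $n$ is sufficiently large. I would exploit the product decomposition $Q_n\cong Q_k\times Q_{n-k}$, for a parameter $k=k(n,\varepsilon,m)$ to be chosen so that $k\to\infty$ but $k/n\to 0$. Write vertices of $Q_n$ as pairs $(s,f)$ with $s\in Q_k$ (the \emph{control} coordinates) and $f\in Q_{n-k}$ (the \emph{free} coordinates); an edge is either a \emph{control edge} (changes a control coordinate) or a \emph{free edge} (changes a free coordinate).

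The graph $G$ would be built in two layers. \emph{Layer 1:} in each of the $2^{n-k}$ slices $\{(s,f_0):s\in Q_k\}$, place a fixed copy of a small $(Q_k,Q_m)$-saturated graph $H\subseteq Q_k$. Iterating a known bound (e.g.\ from Choi--Guan for $m=2$, or a simple inductive construction for general $m$) gives an $H$ whose density in $Q_k$ tends to zero as $k\to\infty$. The contribution of Layer~1 is $2^{n-k}\cdot e(H)$, which gives density in $Q_n$ roughly $(\text{density of }H\text{ in }Q_k)\cdot(k/n)\to 0$. \emph{Layer 2:} fix a small anchor set $A\subseteq Q_k$ (for concreteness a dominating set of $Q_k$, so $|A|=O(2^k/k)$), and at each anchor $s^\ast\in A$ include enough free edges of $Q_n$ attached to $s^\ast$ to serve as bridges for saturation arguments. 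This contributes density $O(|A|/2^k)=O(1/k)$.

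Verification breaks into two parts. \emph{$Q_m$-freeness:} a $Q_m$ in $G$ uses some set $D$ of $m$ coordinate directions. If $D$ lies entirely in the control coordinates, the $Q_m$ is contained in one slice, contradicting $Q_m$-freeness of $H$; if $D$ contains a free direction, then the $Q_m$ contains free edges, which by construction live only over anchors, and one has to verify that the free-edge structure over the anchors is itself designed to be $Q_m$-free. \emph{Saturation:} for a missing control edge in a slice, lifting $H$'s own saturation witness produces a $Q_m$; for a missing free edge $e=((s,f),(s,f+e_j))$, one builds a $Q_m$ using direction $j$ together with one control direction $\ell$ where $s+e_\ell\in A$, so that the two control edges of the $Q_m$ are provided by $H$-copies in slices $f$ and $f+e_j$, and the required second free edge is provided by the anchor structure at $s+e_\ell$.

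The main obstacle is the compatibility between $H$ and $A$: the coverage argument for non-anchor free edges requires $H$ to contain, for each $s\in Q_k\setminus A$, an edge to some neighbour in $A$ (so that an appropriate $\ell$ exists), while $H$ must simultaneously remain sparse and $Q_m$-free. A natural way to achieve this is to take $A$ to be an independent dominating set (e.g.\ a perfect covering code when $k=2^r-1$), so that the edges between $A$ and $Q_k\setminus A$ form a $Q_2$-free bipartite graph which can be added to $H$ without creating new $Q_m$'s, perhaps after deleting a few auxiliary edges. Pinning down this construction so that all three requirements—$Q_m$-freeness, full saturation, and density $\to 0$—hold simultaneously is the principal technical work I expect in the proof.
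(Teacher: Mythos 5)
There is a genuine gap, and in fact several. The most serious is that your Layer~2 construction, as described, cannot be $Q_m$-free: if you ``include enough free edges'' at an anchor $s^\ast\in A$ to cover all free directions, then the induced subgraph on $\{s^\ast\}\times Q_{n-k}$ is a full $Q_{n-k}$, which certainly contains $Q_m$. You flag this yourself (``one has to verify that the free-edge structure over the anchors is itself designed to be $Q_m$-free''), but resolving it is the whole difficulty, and the independent-dominating-set/covering-code idea you offer does not address it -- independence of $A$ in $Q_k$ controls control-direction adjacency between anchors, not the $Q_m$'s arising inside a single anchor fibre along free directions. A second, structural problem is the circularity in Layer~1: you want $H\subseteq Q_k$ to be a $(Q_k,Q_m)$-saturated graph whose density tends to $0$, but that is exactly the statement being proved. (Choi--Guan gives density $\tfrac14+o(1)$, not $o(1)$, and no ``simple inductive construction'' for general $m$ is on record.) You could drop this requirement since $\rho_H\cdot k/n\to 0$ anyway, but then all the work is pushed onto Layer~2, which is where the construction breaks. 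Third, your saturation argument for a missing free edge only produces a $Q_2$ (one free direction $j$ plus one control direction $\ell$); for $m\ge 3$ you would need $m-1$ additional compatible directions and the sketch does not explain where they come from.

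The paper's proof uses the opposite decomposition and an iteration rather than a one-shot construction. It writes $Q_{n+k}=Q_n\,\square\,Q_k$ with $k$ small, uses a Hamming code in $Q_k$ to colour the principal $Q_n$'s, and fills the ``code'' slices with a low-density $Q_{m-1}$-saturated graph (available by induction on $m$) while filling the others with $Q_m$-saturated graphs $A_1,\dots,A_k$ that jointly cover every copy of $Q_{m-1}$ along most directions. Because external edges form only a $k/(n+k)$-fraction of all edges, adding all external edges at the code slices, followed by a greedy clean-up (Lemma~\ref{greedylemma}), keeps the density small. Crucially, there is no large fully-connected fibre: each empty slice is $Q_{m-1}$-free, so no $Q_m$ can cross into it. The density is then driven to $0$ by iterating this step, gaining a multiplicative factor $(1-\tfrac{1}{2k})$ each time. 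The iterative density increment is the essential idea that your one-shot layered construction is missing, and it is also what makes the general-$m$ case tractable via induction.
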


Slightly more precisely, we show $sat(Q_n,Q_m)\leq \frac{c_1}{n^{c_2}} e(Q_n)$, where $c_1$ and $c_2$ are constants depending on $m$. In the case $m=2$, $c_2=6/7$; it is higher for larger values of $m$.

In Section \ref{Q_2section}, we prove a stronger bound for the semisaturation version of  the problem.

\begin{theorem}\label{semisatupper}
For all $n, m$, $s\text{-}sat(Q_n,Q_m)< (m^2+\frac{m}{2}) 2^n$.
\end{theorem}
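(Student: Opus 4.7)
The plan is to exhibit an explicit $(Q_n,Q_m)$-semi-saturated spanning subgraph $G\subseteq Q_n$ with fewer than $(m^2+m/2)2^n$ edges.

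First I would reformulate: since copies of $Q_m$ already present in $G$ persist when we add an edge, $G$ is $(Q_n,Q_m)$-semi-saturated if and only if every non-edge $e\in E(Q_n)\setminus E(G)$ lies in some $m$-dimensional subcube $C\subseteq Q_n$ with $E(C)\setminus\{e\}\subseteq E(G)$. So the task reduces to covering every potential non-edge by a near-complete $Q_m$ in $G$.

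I would construct $G$ in two layers. Fix an $(m-1)$-subset $T\subseteq[n]$ of reference directions and include in $G$ every edge of $Q_n$ whose direction lies in $T$; this contributes $(m-1)2^{n-1}$ edges and forces the complete $Q_{m-1}$-fiber on $T$ to sit in $G$ at every position. Then, for each remaining direction $j\notin T$ and each $Q_m$-subcube on directions $T\cup\{j\}$ at position $y\in\{0,1\}^{[n]\setminus(T\cup\{j\})}$, include exactly $2^{m-1}-1$ of its $2^{m-1}$ direction-$j$ edges in $G$. Any non-edge $\{u,v\}$ in direction $j$ is then covered by the $Q_m$-subcube on $T\cup\{j\}$ containing it: its $(m-1)2^{m-1}$ edges in directions $T$ lie in $G$ by the first layer, and all but one of its $2^{m-1}$ direction-$j$ edges lie in $G$ by the second layer, the omitted one being $\{u,v\}$ itself.

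The main obstacle is driving the edge count down to $(m^2+m/2)2^n$: the construction above has second-layer count $(n-m+1)(2^{m-1}-1)2^{n-m}=\Theta(n2^n)$, a factor of $n$ too many. To save that factor one makes the second-layer direction-$j$ edges for one reference set do double duty as first-layer $Q_{m-1}$-fiber edges for another. Concretely, replace the single $T$ by a family of $m$ rotated $(m-1)$-subsets $T^{(1)},\dots,T^{(m)}$ arranged so that each direction lies in $m-1$ of them; the overlapping first-layer contributions then total roughly $m^2\cdot 2^{n-1}$ edges and the residual second layer contributes at most $m\cdot 2^{n-1}$ more, giving the claimed bound. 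The bulk of the remaining work is the case analysis showing that, after all this reuse, every non-edge is still covered by some near-complete $Q_m$ in $G$; the direction of the non-edge dictates which $T^{(i)}$ plays the role of the reference set, and one verifies in each case that the needed fiber edges and completion edges are present.
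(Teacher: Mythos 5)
Your proposal is built on a direction-based construction, which is fundamentally different from the paper's vertex-based (Hamming code) construction, and unfortunately the gap you flag in the middle of your write-up ("the main obstacle is driving the edge count down") is not actually resolved by the proposed fix.

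To see the problem: your first-layer edges live in the directions $\bigcup_i T^{(i)}$, a set of at most $m(m-1)$ directions. For every direction $j\notin\bigcup_i T^{(i)}$ (there are roughly $n-m^2$ of these), \emph{every} edge in direction $j$ is a potential non-edge that must be covered by a near-complete $Q_m$. Covering the $2^{n-1}$ direction-$j$ edges requires keeping $(1-2^{-(m-1)})2^{n-1}$ of them in $G$ (you must retain $2^{m-1}-1$ of every $2^{m-1}$ parallel direction-$j$ edges in each $Q_m$-subcube), which is $\Theta(2^n)$ edges per direction. With $\Theta(n)$ such directions you are stuck at $\Theta(n2^n)$ total. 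The "double duty" idea only helps for directions that already lie in some $T^{(i')}$ — and those directions never needed a second layer, since all their edges are already present. Arranging that "each direction lies in $m-1$ of them" with $m$ sets of size $m-1$ forces $m(m-1)\geq n(m-1)$, i.e.\ $n\leq m$, so the combinatorics do not close up for large $n$.

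The paper sidesteps this by not working direction-by-direction at all. Instead it picks a sparse set $A$ of vertices (via a Hamming code in each of $m$ blocks of coordinates) of size $O(m2^n/n)$, and lets $G$ be all edges incident with $A$. A vertex in $A$ has degree $n$; a vertex outside $A$ has degree exactly $m$ (one neighbour in $A$ per block, by the dominating property of the code). The key point, which a purely direction-based construction cannot replicate, is that each low-degree vertex $v\notin A$ together with its $m$ neighbours in $A$ forces a near-complete $Q_m$ through $v$, covering all non-edges at $v$ simultaneously. If you want to salvage your approach, you would need to replace the fixed reference-direction idea with something that, like the Hamming code, assigns to every vertex a small set of "special" directions depending on the vertex, not a globally fixed $T$.
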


In the same section, we adapt this proof in the $m=2$ case to remove all copies of $Q_2$ and thus prove a bound on $sat(Q_n,Q_2)$ much stronger than that given by Theorem \ref{generalupper}.

\begin{theorem}\label{Q2upper}
For all $n$, $sat(Q_n,Q_2)< 10\cdot 2^n$.
\end{theorem}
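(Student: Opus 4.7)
The natural approach is to adapt the $m=2$ case of the construction used for Theorem \ref{semisatupper}. That theorem supplies a subgraph $H\subseteq Q_n$ with $|E(H)|<5\cdot 2^n$ in which every non-edge, when added to $H$, completes a new copy of $Q_2$. The target bound of $10\cdot 2^n$ is essentially twice $|E(H)|$, which suggests that destroying all copies of $Q_2$ inside $H$ can be done at a cost of at most doubling the edge count. The plan has three parts.

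First, I would write out the gadget structure of $H$ inherited from the proof of Theorem \ref{semisatupper}: for each vertex (or small family of vertices) the construction contributes a bounded-size configuration of edges, and semi-saturation is witnessed locally, so that for every non-edge $e\in E(Q_n)\setminus E(H)$ one can exhibit a specific path $P_e$ of length three in $H$ with $P_e\cup\{e\}\cong Q_2$. Listing these witness paths is essential, since any modification to $H$ must retain a witness for every non-edge.

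Second, I would classify the copies of $Q_2$ present in $H$. Thanks to the symmetry of the construction, one expects them to fall into a bounded number of types (orbits under the coordinate-permuting action of the hypercube). For each type I would prescribe a local replacement: delete one of the four edges of the offending $Q_2$ so as to destroy it, and if this invalidates a witness $P_{e'}$ for some non-edge $e'$, add a small number of compensating edges that supply a new length-three witness for $e'$ without creating any new $Q_2$. Iterating this process over all types and all orbit representatives yields the modified graph $G$.

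Third, I would verify that $G$ is $Q_2$-free, is semi-saturated, and satisfies $|E(G)|<10\cdot 2^n$. The step I expect to be the main obstacle is the compensation: one must ensure that each new edge added does not itself close up a new copy of $Q_2$ with pre-existing edges of $H$. I anticipate that this requires a coordinated choice of which edges to delete and which to add, for example by always deleting and replacing edges along a prescribed direction within each gadget, so that interactions between neighbouring gadgets remain controllable. Arranging the deletions and additions so that every non-edge retains a surviving witness while no new $Q_2$ emerges, and then tallying the cost, is the heart of the argument; the edge count itself is a routine check once the local replacement scheme has been specified.
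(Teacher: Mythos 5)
Your high-level plan—start from the $m=2$ semi-saturated construction and repair it so it becomes $Q_2$-free—matches the spirit of the paper's proof, and you are right that the paper's bound is roughly twice the semi-saturation bound. But the specific repair strategy you describe cannot work, and the reason is structural rather than a missing technical detail. The $Q_2$-semi-saturated graph from Theorem~\ref{semisatupper} with $m=2$ consists of \emph{all} edges incident with $A=\{(v_1|v_2|v_3): v_1\in C \text{ or } v_2\in C\}$. For each $c\in C$ the entire subcube $(c|{*}\cdots{*}|{*}\cdots{*})$, a full copy of $Q_{n-n_0}$, has every vertex in $A$ and hence every edge present. These are not isolated ``offending'' $Q_2$'s amenable to per-copy local surgery: a $Q_{n-n_0}$ contains $\binom{n-n_0}{2}2^{n-n_0-2}$ copies of $Q_2$ sharing edges densely, so ``delete one of the four edges of the offending $Q_2$'' is not a well-posed local operation, and any classification of $Q_2$'s into finitely many types-with-local-fixes will not terminate. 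What is actually needed is to never admit those subcubes in the first place.

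The paper does this by replacing wholesale edge-inclusion with a parity rule (only include the edge along a given direction if the complementary coordinates have even weight), which cuts the large subcubes down to $Q_2$-free graphs, and then restoring semi-saturation by introducing a \emph{second}, larger dominating set $D$ that handles the odd-parity case. This forces a new auxiliary object: a $Q_2$-free spanning subgraph of $Q_{n_0}$ with two disjoint independent dominating sets $C$ and $D$ (a Hamming code and a union of three of its cosets, linked by a bespoke sparse graph). Your proposal does not anticipate any of the parity mechanism, the second dominating set, or the auxiliary claim, and without them the edge-deletion/compensation scheme has no way to break the large cubes without also destroying all the witness paths passing through them. So as written the plan has a genuine gap at exactly the step you flag as the heart of the argument; the fix is not a coordinated local surgery but a different global selection rule.
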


It is easy to see that both these theorems are best possible up to a constant factor, as all $(Q_n, Q_m)$-semi-saturated graphs have minimum degree $m-1$.

In Section \ref{lowerbounds}, we will improve this trivial lower bound, by showing that

\[s\text{-}sat(Q_n,Q_m)\geq \frac{m+1}{2}\; 2^n.\]

In Section \ref{discussion}, we discuss an extension to our zero density upper bound and raise some open questions.

We briefly mention here a somewhat related saturation problem on the cube. Here, $Q_n$ is considered as $\mathcal{P}(X)$, the power set of an $n$ element set, $X$. Let $F$ be a fixed poset. A family $\mathcal{A}\subseteq \mathcal{P}(X)$ is said to be $F$-saturated if there is no subfamily of $\mathcal{A}$ with the same poset structure as $F$, but adding any set to $\mathcal{A}$ destroys this property. Both the maximum and minimum size of such $\mathcal{A}$ have been studied---see for instance Katona and Tarj\'an \cite{katonatarjan} for the former and Morrison, Noel and Scott \cite{morrisonnoelscott} for the latter. 


\section{Preliminaries}

In this section, we introduce terminology, notation and concepts that will be used frequently in the remainder of this paper.

The hypercube $Q_n$ is the graph with vertex set $\{0,1\}^n$, and with edges between each pair of vertices that differ in exactly one coordinate. Alternatively, the vertex set may be considered as $\mathbb{F}_2^n$, the $n$-dimensional vector space over the field with 2 elements. We write $e_1, \dots, e_n$ for the canonical basis of $\mathbb{F}_2^n$ ($e_i$ is the vector with a 1 in the $i^{th}$ coordinate, and 0's elsewhere). We can see that $x$ is adjacent to $y$ if and only if $y=x+e_i$, for some $i\in \{1,\dots, n\}$. 

A subcube of $Q_n$ is an induced subgraph isomorphic to $Q_m$, for some $m\leq n$. A set $S$ of vertices is the vertex set of a subcube if and only if there is some set of coordinates $J\subseteq [n]=\{1,2,3,...,n\}$, and constants $a_j\in \{0,1\}$ for each $j\in J$ such that $(x_1,...,x_n) \in$ $S$ if and only if for all $j\in J$, $x_j=a_j$. \emph{Fixed coordinates} are those coordinates in $J$, whereas \emph{free coordinates} are coordinates that are not fixed. We can thus represent a subcube as an element of $\{0,1,*\}^n$, with stars in the free coordinates, and $a_j$ in the fixed coordinates. As edges can be thought of as $Q_1$'s, we may represent edges as elements of $\{0,1,*\}^n$ in this way. We will say an edge or subcube lies \emph{along} the directions $i_1, \dots, i_k$ if these contain all the free coordinates of the edge or subcube. The \emph{weight} of $x\in V(Q_n)$ is the number of coordinates of $x$ that are 1. 

We may write $Q_{n_1+n_2}$ as $Q_{n_1}\square Q_{n_2}$, the graph Cartesian product of $Q_{n_1}$ and $Q_{n_2}$. In other words, $Q_{n_1+n_2}$ is formed by replacing each vertex of $Q_{n_2}$ with a copy of $Q_{n_1}$. We call these \emph{principle $Q_{n_1}$'s}. Where there was a $Q_{n_2}$ edge $e$, we instead put edges between corresponding vertices of the principle $Q_{n_1}$'s placed at the endpoints of $e$. So we have two types of edges: \emph{internal edges} which have both endpoints in the same principle $Q_{n_1}$ and \emph{external edges} which have endpoints in different principle $Q_{n_1}$'s. Notice that there are $n_1$ directions along which internal edges lie, and $n_2$ directions along which external edges lie. This view of $Q_{n_1+n_2}$ is crucial in the proof of Theorem \ref{generalupper}; we will write  $Q_{n_1+n_2}$ as $Q_{n_1}\square Q_{n_2}$ when we wish to use this viewpoint.

Another way of encapsulating the product nature of $Q_n$ is to write a vertex $v$ as $(v_1| v_2|\dots | v_t)$, where $v_i\in \{0,1\}^{n_i}=V(Q_{n_i})$ and $n_1+\dots+n_t=n$. Two vertices $(v_1| v_2|\dots | v_t)$ and $(u_1| u_2|\dots | u_t)$ are adjacent if and only if there is a $j$ such that $v_j$ and $u_j$ are adjacent as vertices of $Q_{n_j}$ and for all $i\neq j, v_i=u_i$. We will use this notation heavily in Section 4.

An object we shall use in several of our constructions is the \emph{Hamming code}. The properties of Hamming codes that we require are listed below, but see van Lint \cite{vanlint} for  more backgound.
 For our purposes, a Hamming code $C$ can be thought of as a subset of $V(Q_n)$, where $n=2^r-1$ for some $r$, with the following properties:

\begin{enumerate}
\item $C$ is a linear subspace of $\mathbb{F}_2^n$. More precisely, $C$ is the kernel of an $r$ by $n$ matrix $H$ over the field $\mathbb{F}_2$, called a parity check matrix. The columns of $H$ are precisely the non-zero vectors in $\mathbb{F}_2^r$.
\item $|C|=\frac{2^n}{n+1}$.
\item $C$ has minimum distance 3. In other words, $\min\{d(x,y): x,y\in C\}=3$.
\item $C$ is a dominating set for $Q_n$. In other words, every vertex of $Q_n$ is either in $C$ or adjacent to a vertex in $C$. 
\end{enumerate}

Property 1 is usually taken as the definition of a Hamming code; the other properties are simple consequences of it.

A subset $C$ with these properties exists only if $n=2^r-1$ (and when it exists, it is the largest set with Property 3, and the smallest with Property 4). For other values of $n$, we make do with an \emph{approximate Hamming code}. This is any $C\subset V(Q_n)$ satisfying:

\begin{enumerate}
\item $C$ is a linear subspace of $\mathbb{F}_2^n$. More precisely, $C$ is the kernel of an $r=\lceil{\log(n+1)}\rceil$ by $n$ matrix $H$ over the field $\mathbb{F}_2$. $H$ has as columns any  $n$ distinct binary vectors of length $r$.
\item $|C|=\frac{2^n}{2^{\lceil\log_2(n+1)\rceil}}$.
\item $C$ has minimum distance 3. In other words, $\min\{d(x,y): x,y\in C\}=3$.

\end{enumerate}

\section{Zero density bound on $sat(Q_n, Q_m)$} \label{zero density}

In this section, we shall prove a quantitative version of Theorem \ref{generalupper}, of which Theorem \ref{generalupper} is an immediate consequence. 

\begin{thmbis}{generalupper}
For all $m\geq 1$, there exist constants, $c_m$ and $a_m$, such that $sat(Q_n, Q_m)\leq \frac{c_m}{n^{a_m}} e(Q_n)$. More precisely, $a_1=1$ and $a_m=\frac{1}{7\cdot 3^{m-2}}$, for all $m>1$.
\end{thmbis}

Before discussing the proof of Theorem \ref{generalupper}$'$, we sketch a proof of the $\left(\frac{1}{4}+o(1)\right)$ bound of Choi and Guan, as this contains the main ideas of the proof of Theorem \ref{generalupper}$'$. This proof is significantly different from Choi and Guan's, which may be considered more direct. However, our approach, which uses $\frac{1}{3}+o(1)$ density saturated graphs to build $\frac{1}{4}+o(1)$ density saturated graphs,  naturally gives rise to an iterative approach for proving Theorem \ref{generalupper}$'$.

We assume that there exist three $(Q_n,Q_2)$-saturated graphs, $A_1, A_2$ and $A_3$ of $\frac{1}{3}+o(1)$ density, such that every edge of $Q_n$ lies in one of them. We will use these to produce a $\frac{1}{4}+o(1)$ density $(Q_{n+3},Q_2)$-saturated graph $B'$. These $A_i$ are relatively easy to construct---we will require a generalization of them in our proof of Theorem \ref{generalupper}$'$.

We first construct an `almost' $(Q_{n+3},Q_2)$-saturated graph $B$. We  consider $Q_{n+3}$ as $Q_n\square Q_3$. We leave two principle $Q_n$'s corresponding to antipodal vertices of $Q_3$ empty. Around each of these empty $Q_n$, we arrange copies of $A_1, A_2, A_3$, as in the figure below. We also add all external edges with one endpoint in either of the two empty principle $Q_n$'s (as indicated by the bold edges in the figure).

\begin{figure}
\centering
\includegraphics[scale=0.25, trim=0mm 30mm 0mm 20mm, clip]{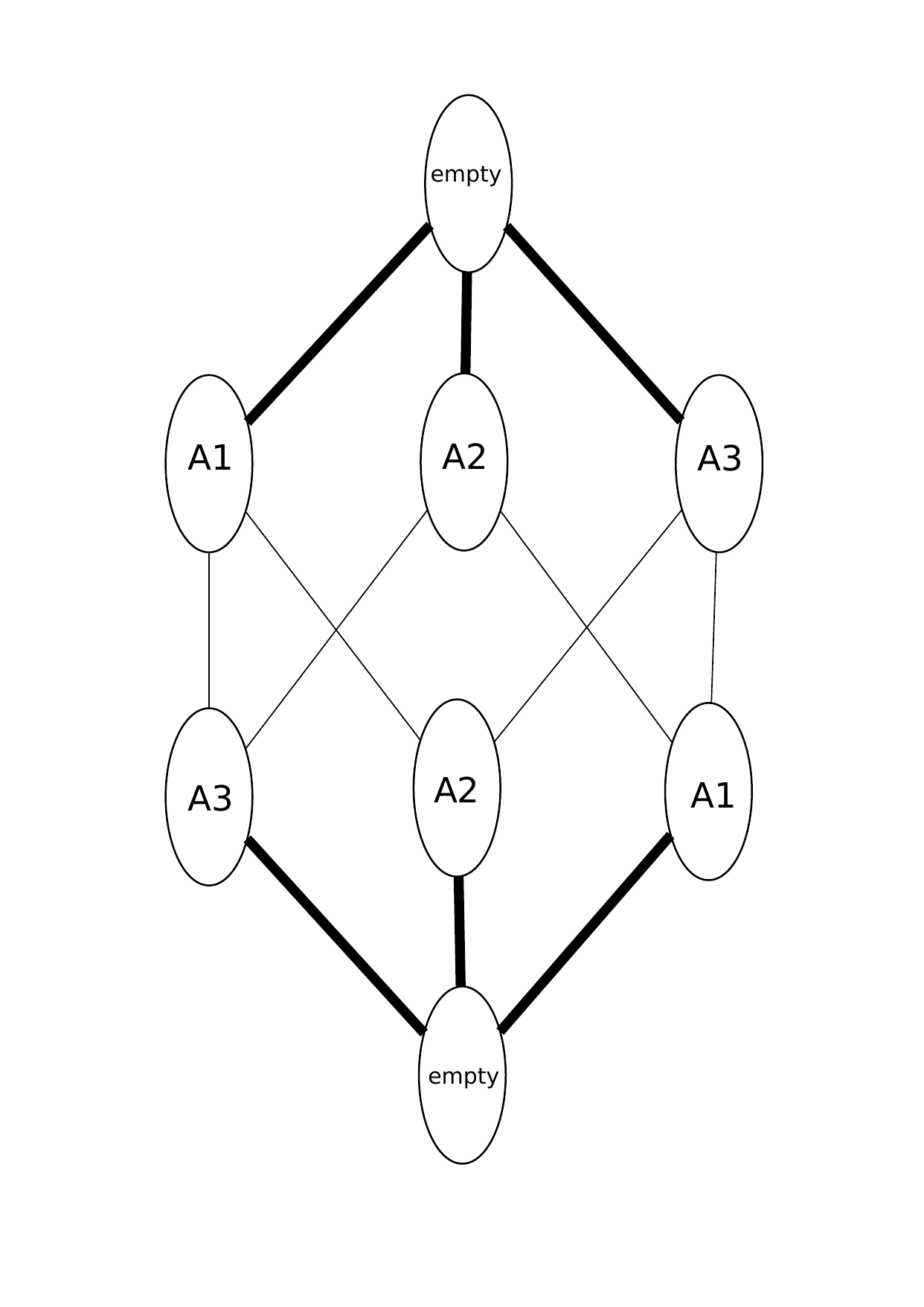}
\caption{The `almost' saturated graph, $B$} 
\end{figure}

The graph constructed has the property that for any edge of an empty $Q_n$, $e$, the corresponding edge, $e'$ is present in one of the $A_i$. So adding $e$ forms a  $Q_2$ comprising $e$, $e'$ and the two external edges that connect corresponding endpoints of $e$ and $e'$. Since the $A_i$ are themselves $Q_2$-saturated graphs, adding any internal edge forms a copy of $Q_2$.

It is easy to see that $B$ is still $Q_2$-free, and a quick calculation shows that $B$ has edge density $\frac{1}{4}+o(1)$. We now prove a simple lemma that allows us to extend $B$ to a $Q_2$-saturated graph. 

\begin{lemma}\label{greedylemma}
Fix $m\geq 2$. Suppose that $G$ is a $Q_m$-free subgraph of $Q_n$ and $S\subseteq E(Q_n)$. Then we can form a $Q_m$-free graph $G'$ by adding no more than $|S|$ edges to $G$ with the property that adding any edge in $S\setminus E(G)$ forms a copy of $Q_m$.
\end{lemma}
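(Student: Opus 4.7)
The plan is to carry out a simple greedy extension of $G$. Enumerate the edges of $E(S)\setminus E(G)$ in some arbitrary order as $f_1,f_2,\dots,f_k$, where $k\le e(S)$, and construct a chain $G=H_0\subseteq H_1\subseteq\cdots\subseteq H_k=G'$ as follows. At step $i$, check whether $H_{i-1}\cup\{f_i\}$ is $Q_m$-free; if so, set $H_i=H_{i-1}\cup\{f_i\}$, and otherwise set $H_i=H_{i-1}$. Because we only enlarge when doing so preserves $Q_m$-freeness, every $H_i$ is $Q_m$-free by induction on $i$, so $G'$ is $Q_m$-free, and at most $k\le e(S)$ edges have been added to $G$.

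It remains to verify the saturation property. Take any $f\in E(S)\setminus E(G')$; note in particular $f\notin E(G)\subseteq E(G')$, so $f$ appears in the enumeration, say $f=f_i$. Since $f_i$ was rejected at step $i$, the graph $H_{i-1}\cup\{f_i\}$ contained a copy of $Q_m$, and this copy must use the edge $f_i$ because $H_{i-1}$ itself is $Q_m$-free. As $H_{i-1}\subseteq G'$, the same copy of $Q_m$ is present in $G'\cup\{f_i\}$, which is what we needed.

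The argument is entirely routine and I do not anticipate any genuine obstacle. The only subtlety worth flagging is interpretive: an edge of $E(S)\setminus E(G)$ that was accepted by the greedy procedure ends up in $E(G')$, so the statement is naturally understood with $E(G')$ in place of $E(G)$ on the right-hand side of the set difference, which is exactly the conclusion the greedy argument above delivers.
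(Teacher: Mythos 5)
Your greedy argument is exactly the paper's proof, just written out in more detail; the paper simply orders the edges of $S$ and adds each one if and only if doing so keeps the graph $Q_m$-free, then observes that only edges of $S$ were added. Your interpretive remark (that the statement is really about $E(S)\setminus E(G')$, since accepted edges now belong to $G'$) is accurate and worth noting, but it does not indicate any substantive difference from the authors' argument.
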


\begin{proof}
We order the edges in $S$ arbitrarily. Consider these edges in this order and add them to $G$ if and only if doing so does not form a copy of $Q_m$. Since only edges of $S$ are added by the process, we are done.
\end{proof}

We apply this lemma to $B$, with $S$ being the set of external edges that have not already been added, i.e. those represented by the thin edges in Figure 1. This forms a  $Q_2$-saturated graph, $B'$. Since there are $\frac{3}{n+3} e(Q_{n+3})$ external edges, the asymptotic edge density is still $\frac{1}{4}$.

The proof of Theorem \ref{generalupper}$'$ uses a similar method multiple times to produce $(Q_n,Q_m)$-saturated graphs of arbitrarily low density. In the case where $m=2$,  we assume that we have a collection of $Q_2$-saturated graphs $A_1, \dots, A_k$ of edge density at most $\rho$, such that every edge of $Q_n$ is contained in at least one of the $A_i$. We will view $Q_{n+k}$ as $Q_{n}\square Q_{k}$ and leave several principle $Q_n$ empty. We shall ensure that each empty $Q_n$ is adjacent, for every $i$, to a principle $Q_n$ filled with $A_i$, and add every external edge leaving these empty $Q_n$. This ensures that adding an edge within the empty $Q_n$ forms a copy of $Q_2$. The constraint on the empty principle $Q_n$ is that the set of vertices that we replace with empty $Q_n$'s must have minimum distance 3, and so we employ a Hamming code, enabling us to produce a graph with a lower density, $\rho'$. Of course, to apply this method again, we need several $(Q_{n+k},Q_m)$-saturated graphs of density $\rho'$, which between them cover the edges of $Q_{n+k}$. This turns out to be not much harder, using cosets of the Hamming code.

In the general $m$ case we adapt this method. We would like to use a collection of $A_i$ that cover all the copies of $Q_{m-1}$ in $Q_n$. Such a collection seems hard to construct, but a modification of the argument shows that it suffices to cover almost all copies of $Q_{m-1}$. The other modification is that instead of using empty principle $Q_n$, we fill them with low density $Q_{m-1}$-saturated graphs, which we may assume exist by induction on $m$. We will use the following claim as a key part of the inductive step in proving the theorem.

\begin{claim} \label{increment}
Suppose we have a collection $A_1,\dots, A_k$ of $(Q_n,Q_m)$-saturated graphs, each of density at most $\rho$, and some $n_0$ such that every $Q_{m-1}$ lies along the first $n_0$ directions is within one of these $A_i$. Suppose also that there is a $(Q_n, Q_m-1)$-saturated graph $G$ with no more than $\frac{c_{m-1}}{n^{a_{m-1}}}e(Q_n)$ edges.  Then there is a collection of $k+1$ $(Q_{n+k},Q_m)$-saturated graphs, $B_0, \dots, B_{k}$, such that every $Q_{m-1}$ that lies along the first $n_0$ directions is in one of these $B_i$. Further, each of the $B_i$ has  density at most $(1-\frac{1}{2k})\rho+f(n,n_0)$, where $f$ is a function that tends to zero whenever $n$, $n_0\to \infty$ in such a way that $\frac{n_0}{n}\to 1$.
\end{claim}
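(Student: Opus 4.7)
The plan is to view $Q_{n+k}$ as the graph Cartesian product $Q_n\square Q_k$ and to use the $k+1$ cosets $C_0,\dots,C_k$ of a Hamming (or approximate Hamming) code $C\subseteq\mathbb F_2^k$ to specify the graphs $B_0,\dots,B_k$. In $B_i$, each principle $Q_n$ at $u\in V(Q_k)$ is filled as follows: if $u\in C_i$ I place a low-density $Q_{m-1}$-free graph $G_u$ that is a disjoint union, over all $y\in\{0,1\}^{n-n_0}$, of a copy of a $(Q_{n_0},Q_{m-1})$-saturated graph of density $\leq c_{m-1}/n_0^{a_{m-1}}$ sitting in the $y$-slice (obtained from the inductive hypothesis on $m$); if $u\notin C_i$ I place a copy of $A_{j(u)}$, where $j(u)$ is the unique $j\in[k]$ with $u+e_j\in C_i$ — unique because $C$ has minimum distance $3$. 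Finally I include every external edge with at least one endpoint in a principle sitting above $C_i$.

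The argument then rests on three checks. \textbf{$Q_m$-freeness of $B$:} no $Q_m$ lives inside a single principle (filled ones are $Q_m$-free, empty ones are $Q_{m-1}$-free); a $Q_m$ using $\geq 2$ external directions would give a $Q_2$-subface in $V(Q_k)$ all of whose edges are incident to $C_i$, forcing two elements of $C_i$ at distance $\leq 2$, contradicting minimum distance $3$; a $Q_m$ using exactly one external direction needs a shared $Q_{m-1}$ in two adjacent principles together with all $2^{m-1}$ connecting external edges, but if one principle is empty the $Q_{m-1}$ is absent while if both are filled the required external edges are not in $B$. \textbf{Covering:} every $Q_{m-1}$ $H$ along the first $n_0$ directions sits inside some principle $v\in V(Q_k)$; by hypothesis $H\subseteq A_j$ for some $j$, and if $i$ is the unique index with $v+e_j\in C_i$ then the principle at $v$ in $B_i$ is filled with $A_j$, so $H\subseteq B_i$. \textbf{Saturation:} I apply Lemma \ref{greedylemma} with $S$ the set of edges whose addition to $B$ does not already create a $Q_m$, producing a $Q_m$-saturated $B_i$ after at most $|S|$ further additions.

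For the density, the only term of order $\rho$ comes from internal edges of the $2^k-|C_i|=2^k\cdot k/(k+1)$ filled principles, giving a share of at most
\[
\frac{k}{k+1}\cdot\frac{n}{n+k}\cdot\rho \;\leq\; \Bigl(1-\tfrac{1}{2k}\Bigr)\rho,
\]
since $k/(k+1)\leq(2k-1)/(2k)$ for $k\geq 1$. Every other contribution tends to $0$ as $n,n_0\to\infty$ with $n_0/n\to 1$: the $G_u$ edges contribute $O(1/n_0^{a_{m-1}})$ per principle; the external edges placed in $B$ contribute $O(1/n)$; the greedily-added external edges between two filled principles contribute at most $O(k/n)$; and the greedily-added internal edges in empty principles — which arise only from the $n-n_0$ directions outside $[n_0]$ not handled by the slice construction of $G_u$ — contribute $O((n-n_0)/(n+k))$. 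Bundling all of these into $f(n,n_0)$ yields the claimed density bound.

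The main obstacle is controlling $|S|$ inside the $f(n,n_0)$ budget; the critical design choice is the slice construction of $G_u$, which guarantees that every edge of an empty principle along a direction in $[n_0]$ already closes a $Q_m$ in $B$ without any greedy help. Indeed such an added edge creates a $Q_{m-1}$ inside the corresponding $(Q_{n_0},Q_{m-1})$-saturated slice, this $Q_{m-1}$ lies along the first $n_0$ directions, is contained in the uniquely assigned neighbouring $A_j$, and meets its copy there via the $2^{m-1}$ external edges which are present in $B$ because the empty principle sits above $C_i$. Only the $n-n_0$ "rogue" directions escape this mechanism, and they are precisely the source of the $(n-n_0)/(n+k)$ piece of $f$.
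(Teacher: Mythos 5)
Your proposal follows the paper's broad architecture — view $Q_{n+k}$ as $Q_n\square Q_k$, use cosets of a Hamming code in $Q_k$ to decide which principal $Q_n$ to "empty," fill the rest with the $A_j$, add all external edges out of the empties, and finish with Lemma~\ref{greedylemma} — but you replace the paper's handling of bad edges with a genuinely different and cleaner mechanism. The paper places a single $(Q_n,Q_{m-1})$-saturated graph $G$ in each empty principal and applies a random automorphism, then averages to bound the number of edges whose added $Q_{m-1}$ sticks out of the first $n_0$ directions. Your slice construction — a disjoint union over $y\in\{0,1\}^{n-n_0}$ of $(Q_{n_0},Q_{m-1})$-saturated graphs — achieves the same effect deterministically: any added internal edge in a direction $\leq n_0$ closes a $Q_{m-1}$ entirely within its slice, hence within the first $n_0$ directions and hence inside some $A_j$, while the edges in the remaining $n-n_0$ directions are exactly $(n-n_0)2^{n-1}$ per empty principal and go into $S$. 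This avoids the expectation argument and gives an explicit bound of the same order. Your $Q_m$-freeness and covering checks are the same in spirit as the paper's, and the density calculation lands on the same $\bigl(1-\tfrac{1}{2k}\bigr)\rho$ leading term.

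One point you gloss over: when $k+1$ is not a power of $2$, $C$ is only an approximate Hamming code, so it is not dominating, and then for some $u\notin C_i$ there is no $j$ with $u+e_j\in C_i$; your definition of $j(u)$ is silent there, and the covering argument ("if $i$ is the unique index with $v+e_j\in C_i$…") also fails for those $v,j$. The paper addresses this explicitly by filling the undominated principals with arbitrary $A_j$'s and choosing those free colours afterwards so that across $B_0,\dots,B_k$ every principal gets each of $A_1,\dots,A_k$ at least once. Your argument needs the same patch, and it is worth noting that your line $2^k-|C_i|=2^k\cdot k/(k+1)$ is also only an equality for genuine Hamming codes; for approximate ones $|C_i|$ is smaller, though the final inequality $\bigl(1-\tfrac{1}{2^{\lceil\log(k+1)\rceil}}\bigr)\tfrac{n}{n+k}\leq 1-\tfrac{1}{2k}$ does still hold.
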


A precise upper bound on the densities of the $B_i$ is required for the quantitative part of the theorem; this will be stated at the end of the proof of this claim.

\begin{proof}[Proof of Claim \ref{increment}]

We start by constructing a $k+1$ colouring $c_0$ of $Q_k$, with the colours $0,1,\dots, k$. Fix $C_0$, an approximate  Hamming code in $Q_k$.  We set $c_0(x)=0$ for all $x\in C_0$ and for all $j\in \{1,\dots ,k\}$ and all $x \in C_0$, we set $c_0(x+e_j)=j$. Note that when $k+1$ is not a power of 2 (i.e. when we do not have a genuine Hamming code), this colouring is not fully defined, since $C_0$ is not dominating. For now we assign arbitrary colours other than 0 to these vertices, but we will later decide on these colours.

We write $Q_{n+k}=Q_n\square Q_k$. We  induce from $c_0$ a colouring on the set of principle $Q_n$'s in the natural way. We start forming the graph $B_0$ by placing a copy of $A_j$ in each principle $Q_n$ coloured $j$, for each $j\neq 0$. Also, we add to the graph $B_0$ every external edge with one endpoint in a principle $Q_n$ coloured 0. 

We place a graph isomorphic to  $G$ in each $Q_n$ that is coloured 0 (we will choose which isomorphism later).

Notice that so far, $B_0$ is $Q_m$-free. Indeed, suppose that $B_0$ does contain a $Q_m$. This $Q_m$ cannot lie entirely within a single principle $Q_n$, by our assumption that the $A_i$ are saturated. As we have only added external edges that leave $Q_n$ coloured 0, the $Q_m$ may contain an edge between two principle $Q_n$'s only if one of them is coloured $0$. Since the Hamming code has minimum distance 3, the $Q_m$ must contain edges in exactly two principle $Q_n$'s, one of which is coloured $0$. But such $Q_n$ are $Q_{m-1}$-saturated and thus contain no $Q_{m-1}$, yielding a contradiction.

So far, $B_0$ is not quite $Q_m$-saturated---for instance adding an external edge may not create a copy of $Q_m$. However, we use Lemma \ref{greedylemma} to remedy this. We add at most $\frac{k}{n+k} e(Q_{n+k})$ edges to $B_0$ and we now only need to consider adding internal edges.

Adding an edge within a $Q_n$ coloured $j\neq 0$ forms a $Q_m$, as each $A_j$ is $Q_m$-saturated. Adding an edge within a principle $Q_n$ coloured 0 will form a $Q_{m-1}$ within that $Q_n$. If that $Q_{m-1}$ only uses edges in the first $n_0$ directions, it lies within one of the $A_j$ by the hypothesis of Claim \ref{increment}. Since every principle $Q_n$ coloured zero is adjacent to a principle $Q_n$ of every non-zero colour, a $Q_m$ will be formed. Therefore, we only need to worry about adding edges to $G$ if the $Q_{m-1}$ formed does not lie exclusively along the first $n_0$ directions---we call such edges \emph{bad edges}. We will now show that we may assume there are not very many bad edges.

Apply a random automorphism of $Q_n$ to $G$, our low density $Q_{m-1}$-saturated graph. We call the graph formed $G'\subseteq Q_n$, which is to be placed within a principle $Q_{n}$ coloured 0. Let $e$ be a fixed edge of this principle $Q_n$.

\begin{align*}
\mathbb{P}(\text{$e$ is a bad edge})& \leq 1-\frac{n_0}{n}\cdot\frac{n_0-1}{n-1}\cdot \dots \cdot \frac{n_0-m+2}{n-m+2} \\
&\leq 1-\frac{(n_0-m)^{m-1}}{n^{m-1}} \\
&=\frac{n^{m-1}-(n_0-m)^{m-1}}{n^{m-1}}.
\end{align*}

This tells us that the expected number of bad edges, in each principle $Q_n$ coloured 0, is no more than $\left(\frac{n^{m-1}-(n_0-m)^{m-1}}{n^{m-1}}\right)e(Q_n)$. We now choose the automorphism of $G$ that we left unspecified earlier; we can do this such that we get no more bad edges than the expected number. We use Lemma \ref{greedylemma}, with $S$ being the set of bad edges, to form a graph that we also call $B_0$ that is $Q_m$-saturated.

We now construct the other $B_i$ to cover the required $Q_{m-1}$'s. To construct $B_i$, we repeat the same method used for constructing $B_0$, except we use $C_i:=\{c+e_i:c \in C_0\}$ instead of $C_0$. Note that we can make the arbitrary choices of colours to ensure each principle $Q_n$ is filled with each of the graphs $A_1, \dots, A_k$, in one of the $B_i$. 

It is easy to see that the $B_i$ satisfy the necessary $Q_{m-1}$ condition. Indeed any $Q_{m}\subseteq Q_{n+k}$ along the first $n_0$ directions must lie within a principle $Q_n$. When considered as a subgraph of this $Q_n$, it must lie in a copy of one of the $A_i$---say $A_j$. This principle $Q_n$ is filled with $A_j$ in one of the $B_i$, so we are done. 

It remains only to bound the number of edges in each saturated subgraph, $B_i$. Let $e(A)=\max\{e(A_i)\}, \;e(B)=\max\{e(B_i)\}, \;\rho(A)=\frac{e(A)}{n2^{n-1}}$ and $\rho(B)=\frac{e(B)}{(n+k)2^{n+k-1}}$. In the calculations that follow, we write $a=a_{m-1}$ and $c=c_{m-1}$ for brevity.

Recall that edges were added to each $B_j$ in 4 ways: from copies of $A_i$, from adding external edges, from the $Q_{m-1}$-saturated graphs and from adding bad edges. 

Thus we have:
\begin{align*}
e(B)&\leq  2^k\left( 1-\frac{1}{2^{\lceil \log(k+1)\rceil}}\right) e(A) +\frac{k}{n+k}e(Q_{n+k})\\
&\quad +\frac{2^k}{2^{\lceil \log(k+1)\rceil}} e(Q_n) \left(c_{m-1} n^{-a}+\frac{n^{m-1}-(n_0-m)^{m-1}}{n^{m-1}} \right). \\
\text{Therefore,}\\
\rho(B)&\leq \left(1-\frac{1}{2^{\lceil \log(k+1)\rceil}}\right)\rho(A)+\frac{k}{n+k}\\
&\quad+  \frac{1}{2^{\lceil \log(k+1)\rceil}}\left(c_{m-1} n^{-a}+\frac{n^{m-1}-(n_0-m)^{m-1}}{n^{m-1}} \right)  \\
&\leq \left(1-\frac{1}{2k}\right)\rho(A)+ \frac{k}{n} + \frac{1}{k}\left(c_{m-1} n^{-a}+\frac{n^{m-1}-(n_0-m)^{m-1}}{n^{m-1}} \right).
\end{align*}

Clearly if $n_0$ is large enough, and $n=(1+o(1))n_0$, the last two terms can be arbitrarily small, thus concluding the proof of the claim.

\end{proof}

We now return to prove Theorem \ref{generalupper}$'$.

\begin{proof}[Proof of Theorem \ref{generalupper}$'$]
We use induction on $m$.

\textbf{Base case:} $m=1$. This is trivial---the subgraph of $Q_n$ with no edges is $Q_1$-saturated.

\textbf{Inductive step:} take $m>1$ and assume the Theorem holds for $m-1$-- i.e. there is a $(Q_n, Q_m-1)$-saturated graph $G$ with no more than $\frac{c_{m-1}}{n^{a_{m-1}}}e(Q_n)$ edges.

We first find a collection of subgraphs $A_1,\dots, A_{m+1}$ of $Q_{n_0}$  that satisfy the hypothesis of Claim \ref{increment}, with $\rho=1$. To do this, let $A_i$ initially consist of all edges whose lowest weight endpoint has weight in $\{i,\dots, i+m-2 \} \mod m+1$, and then extend greedily until $A_i$ is $Q_m$ saturated. Each $A_i$ contains every $Q_{m-1}$ whose lowest weight vertex has weight $i \mod m+1$, so every $Q_{m-1}$ is contained in one of these $A_i$. Trivially, we may bound the density of these $A_i$ above by 1, and it is easy to see this is best possible up to a constant.

We now apply Claim \ref{increment} repeatedly, $t$ times. We write $k_i$ and $n_i$ for the value of $k$ and $n$ after the $i^{\text{th}}$ iterate. Clearly, $k_{i+1}=k_i+1, k_0=m+1, n_{i+1}=n_i+k_i$ and $n_t=n_0+\sum_{i=m}^{m+t}i=n_0+O(t^2)$.

After $t$ steps, we end with saturated graphs of density, $\rho$:

\begin{align*}
\rho&\leq \prod_{i=0}^{t-1} \left(1-\frac{1}{2k_i} \right) +\sum_{i=0}^{t-1} \left(\frac {k_i}{n_i} + \frac{c_{m-1}}{k_i} \cdot n_i^{-a}+ \frac{n_i^{m-1}-(n_0-m)^{m-1}}{k_i n_i^{m-1}} \right)\\
&\leq c \prod_{m=1}^{m+t} \left(1-\frac{1}{2i} \right) + \frac{t(m+t+1)}{n_0} + \frac{tc_{m-1}}{m} \cdot n_0^{-a}+ \frac{t}{m} \frac{n_t^{m-1}-(n_0-m)^{m-1}}{n_0^{m-1}} \\
&= c' \cdot \exp\left(-\frac{1}{2} \sum _{i=1}^{t+m} \frac{1}{i}\right)+ O(t^2 n_0^{-1}) +O(t n_0^{-a})+O\left(\frac{t^3}{n_0}\right)\\
&=c''t^{-\frac{1}{2}} + O(t n_0^{-a})+ O(t^3 n_0^{-1}).
\end{align*}

Here, $c, c'$ and $c''$ are constants dependent on $m$. If $m=2$ it is optimal to take $t=n_0^{2/7}$, otherwise $a<\frac{3}{7}$, it is optimal to take $t=n_0^{2a/3}.$

This gives the required bound.

\end{proof}

Note that the better bound for $sat(Q_n,Q_2)$ in the next section can be fed into the induction in the theorem to produce the slightly better bound of $a_m=\frac{1}{7\cdot 3^{m-3}}$.

\section{Bounded average degree constructions} \label{Q_2section}

\subsection{Semi-saturation}

In this section we will prove Theorem \ref{semisatupper}, by constructing for each $m$ a family of $Q_m$-semi-saturated graphs with bounded average degree. Although it seems difficult in general to make these graphs $Q_m$-free, in the $m=2$ case we will use similar ideas to prove Theorem \ref{Q2upper}.

In what follows it will be useful to write $n=m(2^t-1)+r$, where $0\leq r< m2^{t}$, and to let $n_0=2^t-1$. We write a vertex of $Q_n$ as $(v_1|v_2|\dots|v_m| v_{m+1})$, where $v_i\in \{0,1\}^{n_0}$ for $i\leq m$ and $v_{m+1}\in \{0,1\}^r$. The final section of the vector is only included to make the number of coordinates exactly $n$ but otherwise has no importance in the construction.

\begin{proof}[Proof of Theorem \ref{semisatupper}]

Let $C\subseteq \{0,1\}^{n_0}$ be a Hamming Code. We define: 
\[A=\{(v_1|\dots| v_m| v_{m+1}) \in V(Q_n): \exists i\in \{1, m\} \text{ such that } v_i\in C \}.\]
We form $E(G)$ by picking all edges with at least one endpoint in $A$. Note that vertices in $A$ have degree $n$ in $G$; all other vertices have degree $m$. Therefore $e(G)= \frac{1}{2}( (n-m)|A|+m 2^n) \leq \frac{m}{2}( n \frac{2^{n}}{(n_0+1)}+2^n)$. As $\frac{n}{n_0}<2m$,  $e(G)$ satisfies the bounds of the theorem.

We now show that $G$ is $Q_m$-semi-saturated. Assume $e\in E(Q_n)\setminus E(G)$ is  along a direction $i$ in $\{1,n_0\}$ (all other cases can be dealt with similarly). We write the endpoints of the edges as $(v_1|v_2|\dots v_m| v_{m+1})$ and $(v_1'|v_2|\dots|v_m| v_{m+1})$, where $v_1'$ and all of the $v_i$ do not lie in $C$.  Thus for $i=2,3,\dots, m$ there exists $c_i\in C$ adjacent to $v_i$. Consider the $2^m$ points of the form $(x_1|\dots|x_m|v_{m+1})$, where $x_1 \in\{v_1, v_1'\}$ and for $i=2,3,\dots, m$, $x_i\in \{v_i, c_i\}$. These vertices form a subcube of $Q_n$ and all but the endpoints of $e$ are in $A$. Thus when the edge $e$ is added, a copy of $Q_m$ is formed, concluding our proof.

\end{proof}

\begin{remark}
Clearly, when $n=m(2^t-1)$ for some $t$, we get the slightly stronger bound $s\text{-}sat(Q_n,Q_m)\leq \left(\frac{m^2}{2} +\frac{m}{2}\right) 2^n$.
\end{remark}

\subsection{Improved bound for $sat(Q_n,Q_2)$}

In the $m=2$ case, the $Q_2$-semi-saturated graph constructed above consists of all edges incident with vertices in $A=\{(v_1| v_2| v_3) \in V(Q_n): v_1\in C \text{ or } v_2\in C \}$. It is easy to see this contains large subcubes, of the form $(c| *,\dots, *|*,\dots, *)$ or $( *,\dots, *|c|*,\dots, *)$, for $c\in C$. There are other $Q_2$'s in this graph, but those within these large subcubes are hardest to deal with. We prevent subcubes of the first type by only adding edges of the form $\{(c|v), (c|v')\}$, where $c\in \{0,1\}^{n_0}$ and $v\in \{0,1\}^{n-n_0}$ and $v$ has lower weight than $v'$, if $v_1$ has even weight. Of course doing just this alteration means the graph is no longer semi-saturated; we get around this by picking a subset $D$ of $V(Q_{n_0})$ with similar properties to $C$, and adding edges starting at $(d|v_2|v_3)$ if $(v_2|v_3)$ contains an odd number of 1's and if $d\in D$. We make use of the following claim, which allows us to choose a $D$ with the required properties.

\begin{claim}
There exists a $Q_2$-free spanning subgraph, $H$, of $Q_{n_0}$, that has two independent dominating sets, $C, D\subset V(H)=\{0,1\}^{n_0}$, with $C$ disjoint from $D$, where $|C|= 2^{n_0}/(n_0+1)$ and $|D|=3\cdot 2^{n_0}/(n_0+1)$. Further, $H$ only contains edges incident with $C\cup D$ and $e(H)\leq 2^{n_0+1}$.
\end{claim}

We shall prove this claim later, but first we show why it implies the theorem.

\begin{proof}[Proof of Theorem \ref{Q2upper}]

Similarly to before, we write $n=2(2^t-1)+r$, where  $0\leq r < 2^{t+1}$, and let $n_0=2^t-1$. We write an element, $x$, of $\{0,1,*\}^n$ as $(x_1|x_2|x_3)$, where $x_1, x_2\in \{0,1,*\}^{n_0}$ and $x_3\in \{0,1,*\}^r$. We refer to $x_1$ as the first part of $x$, $x_2$ as the second part and so on. We will use this notation particularly when $x$ represents a vertex or an edge of $Q_n$ (it contains no stars or one star).

We start by constructing a graph $G$ that is $Q_2$-free and will then use Lemma \ref{greedylemma} add a `few' edges ($o(2^n)$ edges) to form $G'$, a $Q_2$-saturated graph. As in the proof of Theorem \ref{semisatupper}, we will define a subset, $A$ of the vertices, which will be dominating in $G$:

\[A=\{(v_1|v_2|v_3)\in \{0,1\}^n: v_1\in C\cup D \text{ or } v_2 \in C\cup D\}.\]
The definition of $G$ is slightly more complicated. We add edges to $E(G)$ in three stages, and then delete some of these edges to ensure $G$ is $Q_2$-free.

Firstly, we add all edges $e$ where $e_1\in C$, and the remainder, $(e_2| e_3)$, contains an even number of 1's and a single star, as well as edges where $e_2\in C$ and the remainder, $(e_1|e_3)$ contains an even number of 1's and a single star. We call these Type 1 edges. There are $2 |C|(n-n_0) 2^{n-n_0-2}\leq \frac{(n-n_0)}{2(n_0+1)} 2^n $ Type 1 edges. 

Similarly, we add those edges $e$ where $e_1\in D$ and the remainder, $(e_2|e_3)$ contains an odd number of 1's and a single star, as well as edges where $e_2\in D$ and the remainder contains an odd number of 1's and a single star. We call these Type 2 edges. There are $2  (n-n_0)|D| 2^{n-n_0-2}\leq \frac{3(n-n_0)}{2(n_0+1)} 2^n$ Type 2 edges.

Lastly, we add all edges, $e$ where $e_1$ or $e_2$ is an edge of $H$. There are $2\cdot 2^{n-n_0} e(H)\leq 4\cdot 2^n $ Type 3 edges.

We now delete all edges $e$ which have an endpoint, $(v_1|v_2|v_3)$ such that both $v_1$ and $v_2$ lie in $C\cup D$. Thus $e(G)\leq \left(\frac{2(n-n_0)}{n_0+1}+4\right)2^{n}-\frac{n 2^n}{(n_0+1)^2}.$

Suppose, for contradiction, that $G$ contains a $Q_2$. Note that as all edges of $G$ are incident with a vertex of $A$, this $Q_2$ must contain a vertex $(v_1|v_2|v_3)\in A$, where, without loss of generality, $v_1\in C\cup D$. Note that none of the vertices can have their second part in $C\cup D$, or there is a vertex of the $Q_2$ with both first and second part in $C\cup D$, impossible by our deletion step.

Let $s$ be the number of stars of the $Q_2$ that are in the first part of its vector representation. If $s=2$, all four edges are Type 3 edges, impossible as $H$ is $Q_2$-free.

If instead $s=1$,  suppose the other star is in the second part (the other case is identical). Then we may write the vertices of the $Q_2$ as $(v_1|v_2|v_3)$,  $(v'_1|v_2|v_3)$, $(v'_1|v'_2|v_3)$ and $(v_1|v'_2|v_3)$, where $v_1\in C\cup D$ and $v_2, v'_2\notin C\cup D$. It is easy to see that $v'_1\in C\cup D$. By a parity argument, $v_1$ and $v_1'$ are both in $C$ or both in $D$. But this is impossible as $C$ and $D$ are each $H_0$-independent sets.

Finally, if $s=0$, then we can have only Type 1 edges or only Type 2 edges (depending on whether $v_1\in C$ or $v_1\in D$). But this is impossible by a simple parity argument.

We now show that while $G$ is not quite saturated, it is `almost' saturated. Suppose $e$ is a $Q_n$-edge not incident with $A$. Without loss of generality, the endpoints are $(v_1|v_2|v_3)$ and $(v_1'|v_2|v_3)$, where  $v_1, v_1' v_2, v_3\notin C\cup D$. This is an element of $E(Q_n)\setminus E(G)$. Assume that $(v_1|v_3)$ is even, (the other case is very similar) and that $v_1'$ has higher weight than $v_1$. Then pick $c\in C$ adjacent to $v_2$. $\{(v_1'|v_2|v_3),(v_1'|c|v_3)\}$ and $\{(v_1|v_2|v_3),(v_1|c|v_3)\}$ are Type 3 edges. Also, $\{(v_1|c|v_3), (v_1'|c|v_3)\}$ is a Type 1 edge as $(x|y)$ is even. Thus a $Q_2$ would be formed by adding the edge.

All $Q_n$-edges with exactly one endpoint in $A$ are edges of $G$, so we only need to consider edges where one endpoint, $(v_1|v_2|v_3)$, has $v_1$ and $v_2\in C\cup D$.   There are $\frac{2^n}{n}$ edges of this type, and so we may use Lemma \ref{greedylemma} add them greedily to $G$ to form a $Q_2$-saturated graph $G'$, which has no more edges than the bound in the theorem.

\end{proof}

\begin{remark}
Again, we get a stronger bound for some values of $n$; when $n=2(2^t-1)$ for some $t$, it is easy to see that $sat(Q_n,Q_2)\leq 6\cdot 2^n$.
\end{remark}

We now return to prove the claim.

\begin{proof}[Proof of Claim]

Let $C$ be a Hamming code in $Q_{n_0}$. For $i=1,\dots, n_0$, let $v_i$ be the image of the basis vector $e_i$ under the parity check matrix $M$ of the Hamming code. We may assume that $v_1=(1,0,\dots, 0)$, $v_2=(0,1,0,\dots, 0)$ and $v_3=(1,1,0, \dots, 0)$, as every vector in $\mathbb{F}^t_2$ occurs as a column of $M$ . We shall construct $H$ in four stages, and then prove that it has the required properties.

\begin{enumerate}
\item Add to $E(H)$ every $Q_{n_0}$-edge adjacent to an element of $C$.
\item Add to $E(H)$ every $Q_{n_0}$-edge of the form $\{c+e_1+e_k, c+e_1\}$, where $c\in C$, and where $k\in [4,n_0]$ is such that $v_k$ has a 0 in the first coordinate.
\item Add to $E(H)$ every $Q_{n_0}$-edge of the form $\{c+e_1+e_k, c'+e_2\}$, where $c, c'\in C$, and where $k\in [4,n_0]$ is such that $v_k$ has a 1 in the first coordinate and a 0 in the second coordinate.
\item Add to $E(H)$ every $Q_{n_0}$-edge of the form $\{c+e_1+e_k, c'+e_3\}$, where $c, c'\in C$, and where   $k\in [4,n_0]$ is such that $v_k$ has a 1 in the first coordinate and a 1 in the second coordinate.
\end{enumerate}

Since $C$ is a Hamming code, it is an independent, dominating set and $|{C|=2^{n_0}/(n_0+1)}$. We write $C_i=\{c+e_i: c\in C\}$; in other words, $C_i=M^{-1}(v_i)$. Let $D=C_1\cup C_2\cup C_3$. It is easy to see every edge of $H$ is incident with $C\cup D$.  Since the $C_i$ are disjoint translates of $C$, a Hamming code, $|D|=3\cdot 2^{n_0}/(n_0+1)$. 

Again using that $C_1$ is a translate of a Hamming code, every $x\in V(Q_{n_0})\setminus C_1$ can be written uniquely in the form $c+e_1+e_k$ for $c\in C$ and $k\in [1,n_0]$. The restriction $k\neq 1$ is equivalent to $x\notin C$. The restriction $k\neq 2$ is equivalent to $x\notin C_3$. This is as $M(c+e_1+e_2)=M(c)+M(e_1)+M(e_2)=v_1+v_2=v_3$. Similarly, $k=3$ if and only if $x\in C_2$. Thus steps 2, 3 and 4 ensure $D$ is independent and dominating in $H$.

Notice also that each $x\notin C\cup D$ is $H$-adjacent to exactly 1 element in $D$. Hence $e(H)\leq 2|Q_{n_0}|$, as required. It remains only to show that $H$ is $Q_2$-free. Suppose not. Since we have only added edges with at least one endpoint in $C\cup D$, the $Q_2$ must contain two opposite vertices in $C\cup D$. Since $C$ has minimum distance 3, and since every $x\notin C\cup D$ is adjacent to only 1 element in $D$, one of these vertices is in $D$, and one is in $C$. Thus the vertices of the $Q_2$ may be written in the form $c\in C, c+e_i, c+e_j$ and $c+e_j+e_i\in C_k$, where $i, j\in [4,n_0]$ are such that $v_i+v_j=v_k$, and $k\in \{1,2,3\}$. But it is impossible for all the edges of this $Q_2$ to lie in $e(H)$. Indeed, suppose for example that $k=3$. Then $v_i$ and $v_j$ must both have 1 in the first coordinate and 1 in the second coordinate, impossible if they sum to $v_k$. This concludes the proof of the claim.

\end{proof}

\section{Lower Bounds}\label{lowerbounds}

All the lower bounds in this section are for $s\text{-}sat$; easily $s\text{-}sat(Q_n, Q_m)\leq sat(Q_n, Q_m)$, so the bounds are also valid for $sat$.

If a graph is $(Q_n, Q_m)$-semi-saturated, for $m\geq 2$, it must be connected. Thus it contains a spanning tree for $Q_n$ and so $s\text{-}sat(Q_n, Q_m)\geq 2^n-1$. This shows that Theorems \ref{semisatupper} and \ref{Q2upper} are best possible up to a constant factor.

Another trivial observation improves this for $m\geq 3$: if a graph is $(Q_n, Q_m)$-semi-saturated, it has minimum degree $m-1$. Thus $s\text{-}sat(Q_n, Q_m)\geq \frac{m-1}{2}2^n$.

We do better than both trivial bounds for all $m$.

\begin{theorem}
If $m\geq 2$, $s\text{-}sat(Q_n, Q_m)\geq \left(\frac{m+1}{2}-o(1)\right)2^n $.
\end{theorem}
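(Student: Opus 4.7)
The plan is to double count \emph{almost-full $Q_2$-subcubes} of $Q_n$, meaning 2-dimensional subcubes of $Q_n$ with exactly three of their four edges in $G$. Let $T_3$ denote their number. I will produce a lower bound on $T_3$ from the semi-saturation hypothesis and an upper bound by counting $G$-edge incidences with $Q_2$-subcubes, then compare.

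For the lower bound, take any non-edge $e=\{v,v+e_i\}\in E(Q_n)\setminus E(G)$. By semi-saturation, $G\cup\{e\}$ contains a copy of $Q_m$ through $e$; since every subgraph of $Q_n$ isomorphic to $Q_m$ is a subcube, this copy is a $Q_m$-subcube $W$ of $Q_n$ with $E(W)\setminus\{e\}\subseteq E(G)$. The 2-dimensional subfaces of $W$ containing $e$ are exactly those obtained by choosing a second free direction from the $m-1$ free directions of $W$ other than $i$, yielding $m-1$ distinct $Q_2$-subcubes of $Q_n$ each of which contains $e$ and has its other three edges in $E(W)\subseteq E(G)$. Since each almost-full $Q_2$ contains a unique non-$G$ edge, summing over all non-edges gives
\[
T_3 \;\geq\; (m-1)\bigl(n\cdot 2^{n-1}-e(G)\bigr).
\]

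Meanwhile, each edge of $Q_n$ lies in exactly $n-1$ $Q_2$-subcubes of $Q_n$, so double counting pairs (almost-full $Q_2$, $G$-edge of it) gives $3T_3\leq (n-1)e(G)$. Combining,
\[
e(G)\;\geq\;\frac{3(m-1)\,n}{n+3m-4}\cdot 2^{n-1},
\]
which tends to $\frac{3(m-1)}{2}\cdot 2^n$ as $n\to\infty$ for fixed $m$. Since $\frac{3(m-1)}{2}\geq \frac{m+1}{2}$ for every $m\geq 2$ (with equality at $m=2$), the theorem follows, and one in fact obtains a strictly stronger coefficient for $m\geq 3$. The only nontrivial point to check is in the lower-bound step: that the $m-1$ two-subfaces of $W$ containing $e$ really are pairwise distinct $Q_2$-subcubes of $Q_n$ and really are almost-full in $G$. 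Both are immediate from the way a subcube decomposes into subfaces and from $E(W)\setminus\{e\}\subseteq E(G)$, so no serious obstacle is anticipated and the remainder is routine double counting.
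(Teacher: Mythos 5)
Your proof is correct, and it is a genuinely different double-counting argument from the one in the paper (and, pleasingly, it gives a strictly stronger constant for $m\geq 3$). The paper works with \emph{good pairs} $(v,e)$, where $v$ is an internal vertex of a $G$-path of length $3$ joining the endpoints of the non-edge $e$. For the lower bound they only use that each non-edge lies in at least $2$ good pairs (i.e., at least one almost-full $Q_2$), not the $m-1$ you extract from the $m-1$ two-dimensional subfaces of the forced $Q_m$-copy through $e$. The factor of $m$ enters the paper's argument instead through the upper-bound side: good pairs at a vertex $v$ are bounded by $\binom{d(v)}{2}$, and then convexity together with the constraint $m-1\leq d(v)\leq n$ is used to bound $\sum_v\binom{d(v)}{2}$. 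Your upper bound is instead the edge-$Q_2$ incidence count $3T_3\leq(n-1)e(G)$, which avoids the convexity step entirely. Running the numbers, your argument yields $e(G)\geq\left(\frac{3(m-1)}{2}-o(1)\right)2^n$, which coincides with the paper's $\frac{m+1}{2}$ at $m=2$ but is strictly larger for every $m\geq 3$; so you have in fact strengthened the theorem.

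Two small remarks. First, you invoke the fact that every subgraph of $Q_n$ isomorphic to $Q_m$ is a subcube; this is true and standard (the paper uses it implicitly throughout), but it deserves at least a citation or a one-line inductive justification, since the whole lower-bound count rests on it. Second, the phrasing ``which tends to $\frac{3(m-1)}{2}\cdot 2^n$'' is loose since both sides grow; what you mean (and what follows from your displayed inequality) is $e(G)\geq\left(\frac{3(m-1)}{2}-o(1)\right)2^n$, which is exactly the form needed. Neither point affects correctness.
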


\begin{proof}
Let $G$ be a $(Q_n, Q_2)$-semi-saturated graph with minimum degree $m-1$; note this contains all $(Q_n, Q_m)$-semi-saturated graphs.
We call a  pair $(v, e)$, where  $v \in V(Q_n), e\in E(Q_n)\setminus E(G))$, \emph{good} if there is a path of length 3 in $G$ linking the endpoints of $e$, that passes through $v$, meaning $v$ is not a start or end vertex of the path.

Note that every non-edge of $G$ is in at least 2 good pairs, whereas each vertex $v$ is in at most $\binom{d(v)}{2}$ good pairs.

Therefore \[\sum_{v\in V(Q_n)} \binom{d(v)}{2}\geq 2 (e(Q_n)-e(G)).\]

Subject to fixed $\sum_v d(v)$, the left hand side is maximized when the degrees are as different as possible. But no degree can be larger than $n$ or smaller than $m-1$. Note that $2e(G)=\sum_v d(v)$, so we have $\frac{2e(G)-2^n}{n-1}$ vertices of degree $n$ in this extreme case.

So certainly 
\begin{align*}
\frac{2e(G)-(m-1)2^n}{n-1} \binom{n}{2} &\geq n2^n-2e(G)\\
(n+2) e(G)-n(m-1)2^{n-1} & \geq n2^n\\
e(G) &\geq \left(\frac{m+1}{2}-o(1)\right) 2^n.
\end{align*}

\end{proof}

\section{Further Questions}\label{discussion}

Having seen that $\lim_{n\to \infty} \frac{sat(Q_n,Q_m)}{n2^{n-1}}=0$, it is natural to ask for a more precise bound---while in Section 4 we have determined $sat(Q_n,Q_m)$ up to a constant, for $m=2$, there is still a wide gap between the best upper and lower bounds for general $m$. In particular, we do not know whether families of $Q_m$-saturated graphs of bounded average degree exist for all $m$. 

\begin{question}
For which $m$ does there exist a constant $c_m$ such that for all $n$, $sat(Q_n,Q_m)\leq c_m 2^n$?
\end{question}

In Section 4, we were able to produce better bounds on $s\text{-}sat(Q_n,Q_2)$ than $sat(Q_n, Q_2)$. Further, the construction we had for $s\text{-}sat$ contained many copies of $Q_2$. This small amount of evidence may suggest that in general, the two are different, even asymptotically.

\begin{question}
Is $sat(Q_n,Q_2)=s\text{-}sat(Q_n,Q_2)$ for all $n$? Does equality hold for all sufficiently large $n$? If not, is $\liminf \frac{sat(Q_n,Q_2)}{2^n}> \limsup \frac{s\text{-}sat(Q_n,Q_2)}{2^n}$?
\end{question}

Recall that all our lower bounds are for $s\text{-}sat$---it seems hard to bound $sat$ more strongly.

Another version of $sat$ that has been studied in the literature (see Section 10 of \cite{faudrees}) (where the host graph is $K_n$) could be studied for this problem. We say that a graph $G\subseteq Q_n$ is \emph{$(Q_n,Q_m)$-weakly-saturated} if we can add the edges in $E(Q_n)\setminus E(G)$ one at a time (in some order) such that every new edge creates at least one new copy of $F$. We write $w$-$sat(Q_n,Q_m)$ for the minimum number of edges a $(Q_n, Q_m)$-weakly saturated graph can have. Clearly, $w$-$sat(Q_n,Q_m)\leq s\text{-}sat(Q_n,Q_m)\leq sat(Q_n,Q_m)$. It is not hard to see, by induction on $n$, that there are many weakly $(Q_n,Q_2)$-saturated trees and so $w\text{-}sat(Q_n,Q_2)=2^n-1$. Indeed, given any $G_1, G_2$, possibly different weakly $(Q_{n-1},Q_2)$-saturated trees, we place them in complementary $Q_{n-1}$'s, and connect any one pair of corresponding vertices. This forms a weakly $(Q_{n},Q_2)$-saturated tree. However, $w\text{-}sat(Q_n,Q_m)$ is in general not known.

\begin{question}
For $m\geq 3$, what is $w\text{-}sat(Q_n,Q_m)$?
\end{question}

In \cite{alonkrechszabo}, Alon, Krech and Szab\`o discuss an interesting  hypergraph type generalization of the Tur\'an problem on the hypercube. We write $Q_n^t$ for the $2^t$-uniform hypergraph with vertex set $\{0,1\}^n$ and edge set consisting of all $t$-dimensional subcubes of $Q_n$.  We say that a subhypergraph $H$ of $Q_n^t$ is $Q^t_m$-free if it contains no subhypergraph isomorphic to $Q_m^t$. As in the usual ($t=1$) case of this Tur\'an problem, they ask how many edges $H$ can have- in particular asking for the limit: $\lim_{n\to \infty} \max\left\{\frac{e(H)}{\binom{n}{t}2^{n-t}}\right\}$. This question is still open, but it is interesting to know that the corresponding saturation problem can be attacked by the same method as the proof of Theorem \ref{generalupper}$'$.

Let $H$ be a subhypergraph of $Q_n^t$. We say that $G$ is $(Q^t_n,Q^t_m)$-saturated if $G$ is $Q^t_m$-free but adding another $2^t$-edge to $G$ forms a subhypergraph isomorphic to $Q^t_m$. In other words, $G$ is a maximal $Q_m^t$-free subgraph of $Q_n^t$. We write $sat(Q_n^t, Q_m^t)$ for the smallest number of edges  a $(Q^t_n,Q^t_m)$-saturated $H$ can have. We can show by the same method as the proof of Theorem \ref{generalupper}$'$ that, for $t\geq 1$ and $s\geq 0$,
\[\lim_{n\to \infty} \frac{sat(Q^t_n,Q^t_{t+s})}{\binom{n}{t}2^{n-t}}=0.\]

As in the proof of Theorem \ref{generalupper}{'} we proceed by induction on $s$ with the
$s=0$ case being trivial. The iteration step analogous to Claim \ref{increment} is
based on the same colouring of principal $Q_n$'s. In each principal
$Q_n$ with colour 0 we place a low density $Q_{t+s-1}^t$-saturated
subgraph of $Q_n^t$. We also add all those $2^t$-edges which contain
$2^{t-1}$ points in some principal $Q_n$ with colour 0. The remainder of
the proof is a straightforward generalisation and the details are left
to the reader.

\subsection*{Acknowledgements}
The second author was supported by an EPSRC doctoral studentship.

\end{document}